\newtheorem{thm}{Theorem}%[section]
\newtheorem{cor}[thm]{Corollary}
\newtheorem{lemma}[thm]{Lemma}
\newtheorem{example}[thm]{Example}
\begin{document}

\title[RBAs with two nonreal basis elements]{Schur indices for reality-based algebras with two nonreal basis elements}

\author[A. Herman]{Allen Herman$^*$}
\address{Department of Mathematics and Statistics, University of Regina, Regina, Canada, S4S 0A2}
\email{Allen.Herman@uregina.ca}

\thanks{$^*$The work of the first author was been supported by an NSERC Discovery Grant. }
\date{}

\keywords{Reality-based algebras, association schemes, splitting fields, Schur indices}

\subjclass[2000]{Primary 05E30; Secondary 16S99, 16K20}

\begin{abstract}
This article discusses the representation theory of noncommutative algebras reality-based algebras with positive degree map over their field of definition.  When the standard basis contains exactly two nonreal elements, the main result expresses the noncommutative simple component as a generalized quaternion algebra over its field of definition.   The field of real numbers will always be a splitting field for this algebra, but there are noncommutative table algebras of dimension $6$ with rational field of definition for which it is a division algebra.  The approach has other applications, one of which shows noncommutative association scheme of rank $7$ must have at least three symmetric relations.  
\end{abstract} 

\maketitle

\section{Introduction}

A {\it reality-based algebra} (abbr. RBA) is a pair $(A,\mathbf{B})$, where 

\smallskip
(rba1) $A$ is an associative unital algebra over $\mathbb{C}$ with skew-linear involution $*$; 

\smallskip
(rba2) $\mathbf{B} = \{b_0,b_1,\dots,b_{r-1}\}$ is a distinguished $*$-invariant basis of $A$; 

\smallskip
(rba3) $1_A \in \mathbf{B}$ (we fix $b_0 = 1_A$); 

\smallskip
(rba4) all of the structure constants $\lambda_{ijk}$ given by
$$b_i b_j = \sum_{k=0}^{r-1} \lambda_{ijk} b_k, \mbox{ for } 0 \le i,j,k \le r-1 , \mbox{ are {\it real} numbers; and } $$ 

\smallskip
(rba5) $\mathbf{B}$ satisfies the {\it pseudo-inverse condition}: if we also use $*$ to denote the permutation of order $1$ or $2$ on $\{0,1,\dots,r-1\}$ induced by the action of the involution $*$ on $\mathbf{B}$, then for all $i,j \in \{0,1,\dots,r-1\}$,
$$ \lambda_{ij0} \ne 0 \mbox{ iff } j=i^* \mbox{ and } \lambda_{i^*i0}=\lambda_{ii^*0}>0.$$

\medskip
Axiom (rba4) implies that $\mathbb{R}\mathbf{B}$, the $\mathbb{R}$-span of $\mathbf{B}$, is a $*$-subalgebra of $A$, and axiom (rba5) tells us each $b_i \in \mathbf{B}$ has a unique pseudo-inverse in $\mathbf{B}$.  The dimension $r = |\mathbf{B}|$ is the called the {\it rank} of the RBA.  An element of $\mathbf{B}$ is called {\it real} if it is $*$-invariant, and otherwise it is called {\it nonreal}.  (This terminology reflects the fact that the value of any character of $A$ on a real basis element has to be a real number.  Real basis elements have also been referred to as $*$-symmetric or $*$-invariant in the literature.)  Since $(b_i^*)^*=b_i$ for every $b_i \in \mathbf{B}$, the nonreal basis elements always occur in pairs.    

An RBA {\it has a positive degree map} if there is a complex $*$-algebra representation for which $\delta(b_i) > 0$ for all $b_i \in \mathbf{B}$.   In this case each distinguished basis can be normalized by positive constants so that the coefficient $\lambda_{ii^*0}$ of $b_0=1$ in $b_ib_i^*$ is equal to $\delta(b_i)$, for all $b_i \in \mathbf{B}$.  This basis is unique relative to $\delta$ and we refer to it as the {\it standard} basis of the RBA.  We will assume from now on that the distinguished basis of an RBA with positive degree map is this standard one.  For further background on RBAs, we refer the reader to \cite{Blau09}. 

The {\it field of definition} $F$ of an RBA is the minimal field extension of $\mathbb{Q}$ containing $\Lambda$, where $\Lambda$ is the set of structure constants for the standard basis.   By (rba4) we know this is always a subfield of $\mathbb{R}$.  We will say an RBA is {\it $F$-rational} when its field of definition is $F$, and {\it rational} when its field of definition is $\mathbb{Q}$.  As the structure constants relative to the regular matrices of a finite group or the adjacency matrices of an association scheme are nonnegative integers, finite groups and association schemes give familiar examples of rational RBAs with positive degree map.  

When $F$ is the field of definition for an RBA with positive degree map, $F\mathbf{B}$ will be an $r$-dimensional semisimple $F$-algebra  \cite{Hanaki00}.  A {\it splitting field} of $F\mathbf{B}$ is a field extension $K$ of $F$ for which $K\mathbf{B}$ is a direct sum of full matrix algebras over $K$.  In particular, it is a field $K$ for which every irreducible character of $A$ is afforded by a matrix representation that maps every element of the standard basis $\mathbf{B}$ to a matrix with entries in $K$; i.e.~every  $\chi \in Irr(A)$ is realized over $K$. 

Our main result concerns noncommutative RBAs with positive degree map that have exactly one pair of nonreal standard basis elements.  The argument makes use of the general observation that irreducible characters of RBAs can always be realized up to similarity by $*$-representations; i.e.~those that satisfy $\mathcal{X}(a^*) = \overline{\mathcal{X}(a)^{\top}}$ for all $a \in A$.  We apply Linchenko and Montgomery's extension of Frobenius-Schur indicator theory for algebras with involution \cite{LM2000} to obtain some restrictions on irreducible characters of RBAs with two (and four) nonreal basis elements.  When the RBA $(A,\mathbf{B})$ has two nonreal basis elements, it shows there is a unique irreducible character $\chi$ of degree $2$, all other irreducible characters are of degree $1$, and every irreducible character is realizable over $\mathbb{R}$.  Being the unique irreducible character of degree $2$, $\chi$ takes values in the field of definition $F$.  If $\mathcal{X}$ is a real $*$-algebra representation affording $\chi$, then $\mathcal{X}(F\mathbf{B})$ is a $4$-dimensional simple $F$-algebra, and hence it can be expressed as a generalized quaternion algebra in terms of its symbol $\big( \dfrac{\alpha, \beta}{F} \big)$ for a pair of parameters $\alpha, \beta \in F^{\times}$.  Our main result gives a character-theoretic formula for the symbol of $\mathcal{X}(F\mathbf{B})$.  Although $\mathbb{R}$ is a splitting field, we give examples of some integral RBAs of rank $6$ where this component is a division algebra.   

Along the way we explore some applications of these methods.  For instance, the application of the indicator to noncommutative RBAs with $2$ or $4$ basis elements can be applied to establish a few cases of the ``real Schur index one'' question for irreducible characters that arise in the realization cone of an abstract regular polytope.  In the last section we consider the application of our results to association schemes and noncommutative RBAs of small rank.  Although the only finite groups with exactly two nonreal elements are the dihedral groups of order $6$ and $8$, there are infinite families of table algebras and association schemes with this property.  It applies, for example, to all noncommutative RBAs of rank $5$ and $6$ with positive degree map that were discussed in \cite{HMX17} and \cite{HMX18}.  There are noncommutative RBAs of rank $7$ with either two, four, or six nonreal basis elements.  We will give an example of a noncommutative rational rank $7$ RBA with three pairs of nonreal basis elements for which $\mathcal{X}(\mathbb{R}\mathbf{B})$ is real quaternion algebra.   However, integrality restrictions on the character table show such RBAs can never have algebraic integer structure constants, so noncommutative association schemes with six nonreal basis elements cannot exist.  

\section{Frobenius-Schur indicator theory}   

In this section we show that the field of real numbers is a splitting field for $\mathbb{R}\mathbf{B}$ when $\mathbf{B}$ contains only $2$ nonreal elements.  Intuitively this is somewhat obvious because any basis of the real quaternion algebra $\mathbb{H}$ must have $3$ nonreal elements, but by appealing to Frobenius-Schur indicator theory we can actually give a complete characterization of the real realizability of irreducible characters of $\mathbb{C}\mathbf{B}$ when $\mathbf{B}$ has $2$ or $4$ nonreal elements. 

Frobenius-Schur indicator theory for group algebras over $\mathbb{R}$ was extended to algebras with involution by Linchenko and Montgomery in \cite[Theorem 2.7]{LM2000}.  We will apply their main theorem in the case where $A = \mathbb{C}\mathbf{B}$, where $B$ is a $*$-invariant basis of $A$ whose structure constants are real - these assumptions are equivalent to axioms (rba1), (rba2), and (rba4).  (Here $Tr(S)$ denotes the trace of the linear operator $S$.)  

\begin{thm} \label{LM}
Let $A$ be an $r$-dimensional algebra over $\mathbb{C}$ with skew-linear involution $*$.  Suppose further that $A = \mathbb{C} \mathbf{B}$, where $\mathbf{B}$ is a $*$-invariant basis of $A$ and the structure constants relative to $\mathbf{B}$ are real.  Let $S=\bar{*}$.    Suppose $\{a_i,b_i\}_{i=1}^r$ is a pair of dual bases for $A$ with respect to a symmetric bilinear associative nondegenerate form on $A$.  For all $\chi \in Irr(A)$, define
$$\nu(\chi) = \dfrac{\chi(1)}{\chi(\sum_{i=1}^r a_i b_i)} \sum_{i=1}^r \chi(S(a_i))b_i.$$

Then 

\begin{enumerate} 
\item $\nu(\chi) = 0$, $1$, or $-1$ for all $\chi \in Irr(A)$;  

\item $\nu(\chi) = \begin{cases} 0 & \mbox{ iff $\chi$ is not real-valued; } \\ 1 & \mbox{ if $\chi$ is realized over $\mathbb{R}$; } \\ -1 & \mbox{ if $\chi$ is real-valued but not realizable over $\mathbb{R}$; and } \end{cases}$

\item $Tr (S) = \sum_{\chi \in Irr(A)} \nu(\chi) \chi(1_A)$.
\end{enumerate} 
\end{thm}

Note that since $\mathbf{B}$ has real structure constants, a representation affording $\chi \in Irr(A)$ restricts to a representation of $\mathbb{R}\mathbf{B}$, and in this case the three cases of $\nu(\chi)$ in Theorem \ref{LM} (ii) distinguish when the simple component of $\mathbb{R}\mathbf{B}$ is isomorphic to a full matrix ring over $\mathbb{C}$, $\mathbb{R}$, or $\mathbb{H}$, respectively.  (Note that the statement of (ii) in \cite{LM2000} has to be made in terms of the existence of symmetric or skew-symmetric nondegenerate $A$-invariant bilinear forms because they make no assumptions on the field generated by the structure constants of a basis - see \cite[pg. 348-349]{LM2000}.) 

\begin{lemma} \label{Ess}
Suppose $A$ is a finite-dimensional algebra with involution over $\mathbb{C}$ that has a $*$-invariant basis $\mathbf{B}$ whose structure constants are real. 

\begin{enumerate} 
\item The number of $*$-invariant elements of $\mathbf{B}$ is 
$$s = \sum_{\chi \in Irr(A)} \nu(\chi) \chi(1_A).$$

\item If $A$ has an irreducible character with $\nu(\chi) = -1$, then $s \le r - 6$. 

\item If $s = r-2$, then $A$ has a unique irreducible character $\chi$ with $\chi(1_A) = 2$; all other irreducible characters of $A$ have degree $1$; and every irreducible character of $A$ is realizable over $\mathbb{R}$. 

\item If $s = r-4$, then either (a) $A$ has two irreducible characters of degree $2$, all other irreducible characters of $A$ have degree $1$, and every irreducible character of $A$ is realizable over $\mathbb{R}$; or (b) $A$ has one irreducible character with $\chi(1_A)=2$ that is realizable over $\mathbb{R}$, two complex-valued irreducible characters of degree $1$, and all other irreducible characters are real-valued with degree $1$. 
\end{enumerate}
\end{lemma} 

\begin{proof} 
(i) Since the distinguished basis $\mathbf{B}$ is $*$-closed (and hence $S$-closed), $Tr (S)$ is the number of $*$-invariant basis elements.  Applying Theorem \ref{LM} (iii) proves (i). 

\medskip
(ii) If $r = \dim A$, then $r = \sum_{\psi \in Irr(A)} \psi(1_A)^2$.  If $\chi \in Irr(A)$ with $\nu(\chi)=-1$, then the difference between $r$ and $s$ is at least $\chi(1_A) [\chi(1_A) + 1]$.  Since $\nu(\chi)=-1$ implies the simple component of $\mathbb{R}\mathbf{B}$ corresponding to $\chi$ has even dimension, $\chi(1) \ge 2$.  Therefore, $r - s \ge 6$.  

\medskip
(iii) By (i), we have 
$$ r = \sum_{\psi \in Irr(A)} \psi(1_A)^2 \qquad \mbox{ and } \qquad s = r-2 = \sum_{\psi \in Irr(A)} \nu(\psi) \psi(1_A).$$  The gap between $\sum_{\psi} \psi(1_A)^2$ and $\sum_{\psi} \nu(\psi) \psi(1_A)$ is minimized when $\nu(\psi) = 1$ for every $\psi \in Irr(A)$, and this minimal gap is equal to $\sum_{\psi(1_A)>1} (\psi(1_A)^2 - \psi(1_A))$.  Since this is exactly $2$, we can conclude that there is only one irreducible character of degree $>1$, that this irreducible character has degree $2$, and that $\nu(\psi) = 1$ for every $\psi \in Irr(A)$. The latter implies $\psi$ is realizable over $\mathbb{R}$ for every $\psi \in Irr(A)$.  

\medskip
(iv) Since $A$ is noncommutative there is at least one irreducible character with $\chi(1) \ge 2$.  For every irreducible character with $\nu(\chi)=0$ the number of nonreal elements increases by $\chi(1)^2$.  Since the irreducible characters with $\nu(\chi)=0$ come in complex conjugate pairs, we can have such a pair of irreducibles only when there is just one such pair of degree $1$ and there is one other irreducible of degree $2$ that has $\nu(\chi)=1$.  In this case all other irreducible characters must have $\chi(1) = 1$ and $\nu(\chi)=1$.  This situation is case (b). 

Now suppose every irreducible character has $\nu(\chi)=1$.  Then $$r-s = \sum_{\chi \in Irr(A)} [\chi(1)^2-\chi(1)].$$  This number can be $4$ only when there are two irreducibles $\chi$ with degree $2$ and all others have degree $1$.  This situation is case (a). 
\end{proof}

\begin{example} {\rm
Lemma \ref{Ess} can sometimes be applied to answer a question about Schurian association schemes arising naturally in the study of abstract regular polytopes - see \cite[Problem 23]{Schulte-Weiss}.  This question asks if $\nu(\chi) \ne 1$ for every irreducible character of the adjacency algebra of the Schurian association scheme corresponding to the $H$-$H$-double cosets of a finite string $C$-group $G$ relative to its first vertex stabilizer subgroup $H$.  Since this is known for all finite Coxeter groups, one must consider finite groups that are homomorphic images of infinite Coxeter groups.  The question is answered affirmatively by Lemma \ref{Ess} for the particular group $G$ whenever the set $G/\!\!/H$ of $H$-$H$-double cosets in $G$ contains two or four double cosets with $g^{-1} \not\in HgH$. We have found some such Schurian association schemes among rank $3$ string $C$-groups, which are characterized by their Schlafi type $[m_1,m_2]_{k,\ell}$ for positive integers $m_1$, $m_2$, $k$, $\ell$.  These groups $G$ are generated by three involutions $a$, $b$, and $c$ subject to the Coxeter group relations $(ab)^{m_1} = (bc)^{m_2} = (ac)^2 = 1$ and additional finiteness-ensuring relations $(abc)^k = (abcb)^{\ell}=1$.  For this string $C$-group the first vertex stabilizer subgroup $H := \langle b,c \rangle$ will be a dihedral subgroup of order $2m_2$.   We have verified with GAP \cite{GAP} that $G/\!\!/H$ has exactly two non-self-inverse double cosets when  
$$ [m_1,m_2]_{k,\ell} = [4,5]_{6,6}, [5,4]_{10,4}, [6,4]_{6,4}, [6,5]_{4,4}, [7,4]_{8,3}, [6,6]_{8,3}, [8,7]_{3,6} $$
and exactly four when 
$$\begin{array}{rcl}
[m_1,m_2]_{k,\ell} &=& [4,5]_{8,5}, [5,5]_{5,6}, [5,6]_{5,5}, [6,6]_{5,5}, [4,8]_{6,6}, [6,6]_{4,6}, \\
& & [6,6]_{5,5}, \mbox{ and } [6,6]_{6,4}. 
\end{array}$$ 
In all of these cases, the corresponding Schurian association scheme is not commutative, so Lemma \ref{Ess} applies to show its degree $2$ irreducible character has $\nu(\chi) \ge 0$.  
}
\end{example}

An explicit formula for the indicator was established for adjacency algebras of noncommutative association schemes (aka. homogeneous coherent configurations) by Higman \cite{Higman75}.  Higman's formula generalizes immediately to noncommutative RBAs with positive degree map.  It requires some of the character-theoretic constants available in the RBA setting.  If $\mathbf{B} = \{b_0=1_A, b_1, \dots, b_{r-1}\}$ is the standard basis for an RBA $(A,\mathbf{B})$ with positive degree map $\delta$, the positive real number $n = \sum_{i=0}^{r-1} \delta(b_i)$ is called the {\it order} of $(A,\mathbf{B})$.  The {\it standard feasible trace} $\tau$ of $(A,\mathbf{B})$ is defined by $\tau(\sum_{i=1}^{r-1} \alpha_i b_i) = n \alpha_0$ for all $\sum_{i=1}^{r-1} \alpha_i b_i \in A$.  This decomposes as $\tau = \sum_{\psi \in Irr(A)}  m_{\psi} \psi$, where the $m_{\psi}$ for $\psi \in Irr(A)$ are the {\it multiplicities} of $(A,\mathbf{B})$.  In \cite[Lemma 2.11 and Proposition 2.21(ii)]{Blau09}, it is shown that all of the multiplicities are positive real numbers, and that $m_{\delta}$ is always $1$.  Higman's formula, for the indicator of an irreducible character of an RBA with positive degree map, is 
$$ \nu(\chi) = \frac{m_{\chi}}{n \chi(b_0)} \sum_{i=0}^{r-1} \frac{\chi(b_i^2)}{\delta(b_i)}, \mbox{ for all } \chi \in Irr(A). $$  

Suppose $(A,\mathbf{B})$ is an RBA whose distinguished basis has exactly two nonreal elements.  By Lemma \ref{Ess} (iii), its unique degree $2$ irreducible character $\chi$ will be realized by a real representation $\Phi$.  We claim that $\chi$ is realized by a real $*$-representation.  A proof of this general fact was observed by Hanaki \cite{Hanaki-unp}.  The proof goes as follows: given a real representation $\Phi: \mathbb{C}\mathbf{B} \rightarrow M_n(\mathbb{C})$ of an RBA with positive degree map that affords $\chi$, 

(i) first show the matrix $A = \sum_{i=0}^{r-1} \frac{1}{\delta(b_i)} \Phi(b_i) \Phi(b_i^*)$ is positive definite symmetric; so

(ii) we can write $A$ as $A = BB^{\top}$ for some invertible symmetric $n \times n$ matrix $B$; and then 

(iii) $\mathcal{X} = B^{-1} \Phi B$ is a real $*$-representation affording $\chi$. 

\section{The case of two nonreal basis elements} 

Let $(A,\mathbf{B})$ be a noncommutative rank $r$ RBA with positive degree map $\delta$ whose standard basis $\mathbf{B}$ has exactly two nonreal elements.  From Lemma \ref{Ess}(iii), we know that $A$ has $r-3$ irreducible characters, one of degree $2$ that is realized over $\mathbb{R}$, and the remaining $r-2$ of them real-valued and of degree $1$.  By the remark at the end of the previous section the real representation $\mathcal{X}$ affording degree $2$ irreducible character $\chi$ can be chosen to be a $*$-representation.  If $F$ is  the field of definition for the RBA, we can see using Galois conjugacy that $\chi$ takes values in $F$.  Since $F(\chi) =F$ and we are working over a field of characteristic zero, $A_{\chi} = \mathcal{X}(F\mathbf{B})$ will be a $4$-dimensional central simple $F$-algebra. 

If $A_{\chi}$ were a division algebra, it will have dimension $4$ over its center, so it will be a generalized quaternion algebra over $F$ \cite[pg. 236]{Pierce}.  A generalized quaternion algebra over $F$ is the $4$-dimensional $F$-algebra $F1 + Fx + Fy + Fxy$ with defining relations $yx=-xy$, $x^2=\alpha$, $y^2=\beta$ for $\alpha, \beta \in F^{\times}$.  The algebra is determined by the choice of $\alpha$ and $\beta$ (in either order), so it is denoted with the symbol $\big( \dfrac{\alpha,\beta}{F} \big)$.  Up to isomorphism, this $F$-algebra is unchanged when $\alpha$ or $\beta$ are multiplied by nonzero squares in $F^{\times}$.  $\big( \dfrac{\alpha,\beta}{F} \big)$ will be isomorphic to $M_2(F)$ if either $\alpha$ or $\beta$ lie in $(F^{\times})^2$ (see \cite[\S 1.6 and \S 1.7]{Pierce}), and if the latter is not the case then the algebra is naturally isomorphic to the cyclic algebra $(F(\sqrt{\alpha})/F,\sigma,\beta)$, so it will be isomorphic to $M_2(F)$ iff $\beta$ is a norm in the extension $F(\sqrt{\alpha})/F$. 

Let $\mathbf{B} = \{ b_0,b_1,\dots,b_{r-1} \}$, with $b_i^*=b_i$ for $i = 0,1,\dots,r-3$ and $b_{r-2}^* = b_{r-1}$.  As in \cite{HMX18}, we set 
$$ \mathcal{X}(b_i) = \begin{bmatrix} r_i & s_i \\ t_i & u_i \end{bmatrix} $$ 
for some $r_i, s_i, t_i, u_i \in \mathbb{R}$, for all $b_i \in \mathbf{B}$.  Since $\mathcal{X}$ is a $*$-algebra representation, $\mathcal{X}(b_i)$ is a symmetric matrix for $i =0,1,\dots,r-3$, and $\mathcal{X}(b_{r-2})^{\top} = \mathcal{X}(b_{r-2}^*)$.

Another general fact we will need is the following: 

\begin{lemma} \label{cpcoeffs}
Suppose $F$ is the field of definition for an RBA $(A,\mathbf{B})$, and let $\chi \in Irr(A)$.  Let $\mathcal{X}$ be a representation of $A$ affording $\chi$. Then $m_{\chi} \in F(\chi)$, and for all $b_i \in B$, the coefficients of the characteristic polynomial of $\mathcal{X}(b_i)$ lie in $F(\chi)$. 
\end{lemma}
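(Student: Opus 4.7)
The plan is to handle the two assertions separately, with the statement about $m_\chi$ being the more delicate one. For $m_\chi \in F(\chi)$ the key observation is that the standard feasible trace $\tau$ is itself $F$-valued. Indeed, $\tau$ is determined by $\tau(b_0) = n$ and $\tau(b_i) = 0$ for $i > 0$, and the pseudo-inverse condition provides $\delta(b_i) = \lambda_{ii^*0} \in F$, so $n = \sum_i \delta(b_i) \in F$.

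Fix a finite Galois extension $K/F$ containing $F(\psi)$ for every $\psi \in Irr(A)$. Entrywise action of $\sigma \in \mathrm{Gal}(K/F)$ on a representation affording $\psi$ yields a representation affording $\sigma \cdot \psi := \sigma \circ \psi$, so $\mathrm{Gal}(K/F)$ permutes $Irr(A)$. Applying $\sigma$ to both sides of $\tau = \sum_\psi m_\psi \psi$ and using $\sigma(\tau) = \tau$ gives $\sum_\psi m_\psi \psi = \sum_\psi \sigma(m_\psi)(\sigma \cdot \psi)$. Linear independence of the irreducible characters of the semisimple $K$-algebra $K\mathbf{B}$ forces $\sigma(m_\psi) = m_{\sigma \cdot \psi}$ for every $\sigma$ and $\psi$. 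Taking $\psi = \chi$ and restricting $\sigma$ to the stabilizer $\mathrm{Gal}(K/F(\chi))$ yields $\sigma(m_\chi) = m_\chi$, whence $m_\chi \in F(\chi)$.

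For the characteristic polynomial claim, use that $\mathcal{X}$ is an algebra homomorphism, so $\mathcal{X}(b)^k = \mathcal{X}(b^k)$ and $\mathrm{tr}(\mathcal{X}(b)^k) = \chi(b^k)$. Since the structure constants of $\mathbf{B}$ lie in $F$, each $b^k$ expands as an $F$-linear combination of basis elements, so $\chi(b^k)$ is an $F$-linear combination of the values $\chi(b_i) \in F(\chi)$ and hence lies in $F(\chi)$. Newton's identities then express the elementary symmetric functions of the eigenvalues of $\mathcal{X}(b)$, i.e.\ the coefficients of $\det(tI - \mathcal{X}(b))$, as polynomials with rational coefficients in the power sums $\mathrm{tr}(\mathcal{X}(b)^k)$; since we are in characteristic zero these formulas apply, and every characteristic polynomial coefficient lies in $F(\chi)$.

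There is no genuine obstacle here; the main care is in the first part, where one must match the Galois action on the scalars $m_\psi$ with the permutation action on $Irr(A)$, and where one needs the $F$-rationality of $\tau$ firmly in hand before descending. Aside from that, the proof just invokes linear independence of characters, Newton's identities, and the pseudo-inverse condition.
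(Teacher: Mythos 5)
Your proof is correct, but it takes a genuinely different route from the paper's. The paper deduces both claims from the centrally primitive idempotent: by \cite[(74.2)]{CRII} the coefficients of $e_{\chi}$ in the basis $\mathbf{B}$ lie in $F(\chi)$, and comparing with the explicit formula $e_{\chi} = \frac{m_{\chi}}{n}\sum_i \frac{\chi(b_i^*)}{\delta(b_i)}b_i$ gives $m_{\chi}\in F(\chi)$; the characteristic polynomial claim then follows because $\mathcal{X}(F\mathbf{B})\simeq F(\chi)\mathbf{B}e_{\chi}$ is an algebra defined over $F(\chi)$. You instead get $m_{\chi}\in F(\chi)$ by Galois descent on the decomposition $\tau=\sum_{\psi}m_{\psi}\psi$ of the ($F$-valued) standard feasible trace, using linear independence of irreducible characters, and you get the characteristic polynomial claim from the power sums $\mathrm{tr}(\mathcal{X}(b)^k)=\chi(b^k)\in F(\chi)$ via Newton's identities. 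Your version is more elementary and self-contained (it needs neither the idempotent formula from \cite{Blau09} nor the Curtis--Reiner result), while the paper's version yields the $F(\chi)$-rationality of the idempotent coefficients as a useful byproduct. Two small points you should tighten: before applying $\sigma\in\mathrm{Gal}(K/F)$ to the scalars $m_{\psi}$ you need to know they lie in $K$, which follows since they are the unique solution of the linear system $\tau(b_i)=\sum_{\psi}m_{\psi}\psi(b_i)$ whose coefficient matrix and right-hand side have entries in $K$; and since $\psi$ need not be realizable over $K$ (Schur index obstructions), the conjugate character $\sigma\cdot\psi$ is better defined by extending $\sigma$ to an automorphism of $\mathbb{C}$ and acting entrywise on a complex representation, or by the induced permutation of the central primitive idempotents. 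Both are routine repairs.
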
 

\begin{proof} 
Since we are working over fields of characteristic zero, the extension $F(\chi)/F$ is separable, so by \cite[(74.2)]{CRII} the center of $\mathcal{X}(F\mathbf{B})$ is isomorphic to $F(\chi)$, and the coefficients of the centrally primitive idempotent $e_{\chi}$ corresponding to $\chi$ in the standard basis $\mathbf{B}$ also lie in  $F(\chi)$.  From \cite[Proposition 2.14]{Blau09}, we know we can express $e_{\chi}$ in terms of the standard basis as $$e_{\chi} = \frac{m_{\chi}}{n} \sum_{i} \frac{\chi(b_i^*)}{\delta(b_i)} b_i.$$  Since the $\delta(b_i) \in F$ for all $b_i \in \mathbf{B}$, it follows that $m_{\chi} \in F(\chi)$.  

Furthermore, we have that $\mathcal{X}(F\mathbf{B}) \simeq F(\chi)\mathbf{B}e_{\chi}$ via a map that restricts to a field isomorphism on the centers and takes $\mathcal{X}(b_i) \mapsto b_ie_{\chi}$, for all $b_i \in \mathbf{B}$.  This implies that for all $b_i \in \mathbf{B}$, $\mathcal{X}(b_i)$ maps to a matrix with entries in $F(\chi)$ under the regular representation of the algebra $\mathcal{X}(F\mathbf{B})$.  Therefore, the coefficients of the characteristic polynomial of $\mathcal{X}(b_i)$ will have coefficients in $F(\chi)$. 
\end{proof} 

The next proof makes use of two standard character identities .  Let $\Phi$ be the set of degree $1$ irreducible characters of $\mathbf{B}$.  Then $n = \tau(b_0) = 2m_{\chi} + \sum_{\phi \in \Phi} m_{\phi}$, and the fact that each $\phi \in \Phi$ is a $*$-representation of $A$ implies that for any $b_j \in \mathbf{B}$ with $b_j^* = b_j$, 
$$n \delta(b_j) = \tau(b_j^2) = m_{\chi}\chi(b_j^2) + \sum_{\phi \in \Phi} m_{\phi} \phi(b_j^2) = m_{\chi}\chi(b_j^2) + \sum_{\phi \in \Phi} m_{\phi} \phi(b_j)^2 .$$
We are now ready for our main theorem. 

\begin{thm} \label{Main}
Let $(A,\mathbf{B})$ be a noncommutative RBA with positive degree map that has two nonreal elements $b_{r-2}$ and $b_{r-2}^*$.  Let $F$ be its field of definition $F$, and let $\chi$ be its unique irreducible character of degree $2$.  Let $\mathcal{X}$ be a real $*$-algebra representation affording $\chi$.  Then there exists a $*$-invariant $b_j \in \mathbf{B}$ such that $\mathcal{X}(b_j) \not\in F \mathcal{X}(b_0)$, and for any such $b_j \in \mathbf{B}$, the symbol of $A_{\chi} \simeq \mathcal{X}(F\mathbf{B})$ can be expressed as $$\bigg( \dfrac{ -nm_{\chi} \delta(b_{r-2}), 2m_{\chi}n - m_{\chi}^2 \chi(b_j)^2 - (\sum_{\phi \in \Phi} 2 m_{\chi} m_{\phi} \phi(b_j)^2)}{F} \bigg).$$   
\end{thm} 

\begin{proof} 
Following the approach in \cite{HMX18}, define $\mathbf{D} = \{ d_0,d_1,\dots,d_{r-3}, c, d \}$ to be the basis of $A$ given by $d_i = b_i$ for $i=0,1,\dots,r-3$, $c = b_{r-2}+b_{r-2}^*$, and $d=b_{r-2}-b_{r-2}^*$.  Then $\{d_0,d_1,\dots,d_{r-3},c\}$ is a basis for the subspace of $A$ consisting of $*$-invariant elements, and $d^* = - d$.  It then follows that 
$$\begin{array}{rl} 
\tau(d d^*) =& - \tau(d^2) \\
=& -\tau(b_{r-2}^2 + b_{r-2}^{*2} - b_{r-2}b_{r-2}^* - b_{r-2}^*b_{r-2}) \\
=& -0 - 0 + n\delta(b_{r-2}) + n\delta(b_{r-2}) \\
=& 2n\delta(b_{r-2}). 
\end{array}$$
Recall from Lemma \ref{Ess}(iii) that every irreducible character of $A$ is real-valued.  This means every irreducible character of degree $1$ will vanish on $d^2$.  Since $\mathcal{X}(d) = \begin{bmatrix} 0 & s_{r-2}-t_{r-2} \\ t_{r-2}-s_{r-2} & 0 \end{bmatrix}$, we have 
$$\begin{array}{rl} 
\tau(d d^*) =& m_{\chi} \chi(d d^*) \\
=& m_{\chi}tr(\mathcal{X}(d) \mathcal{X}(d)^{\top}) \\
=&  2 m_{\chi} (s_{r-2} - t_{r-2})^2. 
\end{array}$$
So it follows that $m_{\chi} (s_{r-2}-t_{r-2})^2 = n \delta_{r-2}.$  Therefore, $$\mathcal{X}(d)^2 = \bigg(\dfrac{-n\delta(b_{r-2})}{m_{\chi}} \bigg) \mathcal{X}(b_0). $$ 
In particular, if we take $x = m_{\chi}\mathcal{X}(d)$ to be one of the generators of $A_{\chi}$ as a quaternion algebra, then $x^2 = -nm_{\chi}\delta(b_{r-2}) I$. 

We now need to find a suitable choice for the other symbol algebra generator $y$.  Since $A_{\chi}$ has dimension $4$, there are at least two of our other nontrivial $*$-invariant elements $d_{\ell} \in \{d_1,\dots,d_{r-3},c\}$ for which $\mathcal{X}(d_{\ell})$ has distinct eigenvalues.  So we can choose this $d_{\ell}$ to be one of the real elements $b_j$ of $\mathbf{B}$.  

After conjugating by a suitable orthogonal matrix (which will not affect the form of $\mathcal{X}(d)$ or the result for $x^2$ above), we can assume $\mathcal{X}(b_j) = \begin{bmatrix} r_j & 0 \\ 0 & u_j \end{bmatrix} \mbox{ with } r_j \ne u_j.$  Since $x$ has the form $\begin{bmatrix} 0 & \alpha \\ -\alpha & 0 \end{bmatrix}$ for some $\alpha \in F^{\times}$, it is easy to see that in order for a symmetric matrix $y=\begin{bmatrix} r & s \\ s & u \end{bmatrix}$ to satisfy $yx = -xy$, we only require $u = -r$.  Setting $y = 2\mathcal{X}(b_j) - (r_j + u_j)\mathcal{X}(b_0)$ accomplishes this, and for this $y$ we have that $y^2 = [(r_j - u_j)^2] I$ is a positive multiple of the identity.  Furthermore by Lemma \ref{cpcoeffs} the trace  $r_j+u_j=\chi(b_j)$ and determinant $r_ju_j$ both lie in $F(\chi)=F$, and hence $y^2 = (r_j-u_j{\ell})^2I = [(r_j+u_j)^2-4r_ju_j] I \in FI$.   Therefore, $A_{\chi}$ has the symbol $\bigg( \dfrac{-nm_{\chi}\delta(b_{r-2}),(r_j-u_j)^2}{F} \bigg)$.  

It remains to give a character-theoretic expression for $(r_j-u_j)^2$.   If $a = \frac{\chi(b_j)}{2}$, then assuming without loss of generality that $r_j > u_j$, we have $(r_j - u_j) = 2\varepsilon$ for some $\varepsilon > 0$.  Furthermore, $r_j^2 + u_j^2 = 2a^2 + 2 \varepsilon^2$, so 
$$ (r_j - u_j)^2 = 4 \varepsilon^2 = 2(r_j^2 + u_j^2) - 4 a^2 = 2 \chi(b_j^2) - \chi(b_j)^2. $$ 
Since $b_j$ is $*$-invariant, $m_{\chi} \chi(b_j^2) = n \delta(b_j) - \sum_{\phi \in \Phi} m_{\phi} \phi(b_j)^2$.  Therefore, 
$$(r_j-u_j)^2 = [\frac{2}{m_{\chi}}(n \delta(b_j) - \sum_{\phi \in \Phi} m_{\phi} \phi(b_j)^2) - \chi(b_j)^2].$$  Multiplying this by $m_{\chi}^2$ produces the desired second parameter in the symbol given for $A_{\chi}$.  
\end{proof} 

\begin{cor} 
Suppose $\mathbf{B}$ is the standard basis of a noncommutative rank $5$ RBA with positive degree map, and let $F$ be its field of definition. Let $\mathcal{X}$ be a $*$-representation affording the degree $2$ irreducible character $\chi$ of $\mathbf{B}$.  Then there will be at least one  $*$-invariant element $b_j \in \mathbf{B}$ for which $\mathcal{X}(b_j)$ has distinct eigenvalues, and for any such $b_j$ the symbol of $A_{\chi}$ is equivalent to $\big( \dfrac{ -\frac12 n(n-1)\delta(b_3), (n-1)\delta(b_j) - \delta(b_j)^2 }{F} \big)$. 
\end{cor}

\begin{proof}
The main results of \cite{HMX17} show these RBAs are determined by their degree map.  The values of the degree $2$ irreducible character are $\chi(b_i) = \frac{-2 \delta(b_i)}{n-1}$ for all $b_i \in \mathbf{B}$, and $m_{\chi} = \frac{n-1}{2}$.  Using the fact that $\Phi = \{\delta\}$ in Theorem \ref{Main} and substituting these values into the formula for the symbol of $A_{\chi}$ gives the result. 
\end{proof} 

\begin{example} {\rm Example 11 from \cite{HMX17} gives a noncommutative rational rank $5$ table algebra with order $25$ whose nontrivial basis elements have degree $6$, that has a $*$-invariant element $b_1$ with 
$$\mathcal{X}(b_1) = \frac14 \begin{bmatrix} -1+5\sqrt{3} & 0 \\ 0 & -1-5 \sqrt{3} \end{bmatrix}.$$  
So in this case we will have $m_{\chi} = 12$, and the symbol for $A_{\chi}$ in the above corollary has parameters $\alpha = -(25)(12)(6) = -2 (30)^2$ and $\beta = (12)^2 (\frac{(25)(3)}{4}) = 3(15)^2$. 
So $A_{\chi}$ has symbol $\big( \dfrac{-2, 3}{\mathbb{Q}} \big)$.  Using the Legendre symbol as in \cite[\S 1.7]{Pierce} to compute the local Schur indices of this generalized quaternion algebra over $\mathbb{Q}$, we see that the local index will be $2$ at the primes $p=2$ and $3$.  (This can also be accomplished with the {\tt LocalIndicesOfRationalQuaternionAlgebra} command in the GAP package {\tt wedderga} \cite{wedderga}.)  So in this case $A_{\chi}$ is a $4$-dimensional $\mathbb{Q}$-division algebra.  }
\end{example}

\begin{example} {\rm Table 1 of \cite{HMX18} gives character-theoretic information for many integral rank $6$ RBAs with positive degree map having order up to 150.  For these the field of definition is $\mathbb{Q}$.  In checking this table we find examples for which $A_{\chi} = \big( \dfrac{ \alpha,\beta}{\mathbb{Q}} \big)$ is a noncommutative division algebra.

The first of these is the eleventh listed in the table, with order $n=33$.  From the information provided we can use the identities of \cite{HMX18} to produce its character table:  
$$\begin{array}{r|cccccc|l}
         & b_0 & b_1 & b_2 & b_3 & b_4 & b_4^* & m_{\psi} \\ \hline
\delta &    1 &   10 &   10 &  10  &    1 &     1     &    1 \\
\phi   &    1  &   -1 &   -1  &  -1  &    1 &     1     &  10 \\
\chi    &   2  &    0  &   0   &   0  &   -1 &    -1    &   11 \\
\end{array} $$
Our formulas in Theorem \ref{Main} tell us $\alpha = -n m_{\chi} \delta(b_4) = -3(11)^2$ and 
$\beta = (11)^2(\frac{2}{11}((33)(10)-(10^2 +10(-1)^2 + 11(0)^2) - (0)^2) = (11)^2 (40)$, so $A_{\chi} \simeq \big( \dfrac{ -3 , 10}{\mathbb{Q}} \big)$.  This rational quaternion algebra has local index $2$ at the primes $p=2$ and $5$, so it is a $4$-dimensional division algebra. 
}
\end{example}

It is interesting to note that the above examples correspond to a table algebras that do not arise from association schemes.  Although we did not find a noncommutative rank $6$ association scheme in Table 1 of \cite{HMX18} that produces a nontrivial rational Schur index, the one primitive table algebra of order $81$ whose feasibility as an association scheme is open does have this property:   

\begin{example} {\rm Let $\mathbf{B}$ be the second example of order $81$ in Table 1 of \cite{HMX18}, which has character table 
 $$\begin{array}{r|cccccc|l}
         & b_0 & b_1 & b_2 & b_3 & b_4 & b_4^* & m_{\psi} \\ \hline
\delta &    1  &  10 &   10 &  20  &   20 &    20    &    1 \\
\phi   &    1  &    1 &    1  &  -7  &     2 &     2     &  20 \\
\chi   &    2  &   -1 &   -1  &   4  &    -2 &    -2     &  30 \\
\end{array} $$
Therefore, $\alpha = - (81)(30)(20) = -6 (90)^2$ and $\beta = (30)^2 (\frac{2}{30}((81)(10)-(10^2 - 20(1)^2) - (-1)^2) = 5(90)^2$, so $A_{\chi} \simeq \big( \dfrac{-6,5}{\mathbb{Q}} )$, which has local Schur index $2$ at the primes $p=2$ and $3$. 
} \end{example} 

\section{Noncommutative RBAs of rank $7$} 

Noncommutative RBAs of rank $5$ and $6$ with positive degree map were discussed in detail in \cite{HMX17} and \cite{HMX18}.  In both cases Frobenius-Schur indicator theory implies that $\mathbf{B}$ has only $2$ nonreal elements, so our main theorem applies to them.  However, the Frobenius-Schur indicator argument is inconclusive in the rank $7$ case.  Suppose $F$ is the field of definition for a noncommutative rank $7$ RBA with positive degree map.  If the irreducible characters are $\delta$, $\phi$, $\psi,$ and $\chi$, where $\delta$ is the postive degree map, $\phi(b_0) = \psi(b_0) = 1$, and $\chi(b_0)=2$, then $\delta$ and $\chi$ take values in $F$.  Therefore, the indicator values are $\nu(\delta)=1$, $\nu(\phi) = \nu(\psi) = 0$ or $1$, and $\nu(\chi)=-1$ or $1$.   The possible numbers of $*$-invariant elements of $\mathbf{B}$ are 
$$\begin{array}{l} 
5, \mbox{ when } \nu(\phi) = \nu(\psi)=1 \mbox{ and } \nu(\chi)=1 \\
3, \mbox{ when } \nu(\phi) = \nu(\psi)=0 \mbox{ and } \nu(\chi)=1, \mbox{ and } \\
1, \mbox{ when } \nu(\phi) = \nu(\psi) = 1 \mbox{ and } \nu(\chi) = -1. 
\end{array}$$

In searching the database of small association schemes, we find several examples noncommutative association schemes of rank $7$ whose standard basis has $2$ nonreal elements.  It is possible to construct noncommutative rank $7$ table algebras with $4$ nonreal basis elements as wreath products of rank $3$ table algebras with $2$ nonreal elements with the noncommutative rank $5$ table algebras discussed in \cite{HMX17}, but we have neither been able to find nor rule out the existence of noncommutative rank $7$ association schemes with $4$ nonreal elements.  

As for noncommutative rank $7$ RBAs whose basis has $6$ nonreal elements, the above shows their character table will have the form 
$$\begin{array}{c|ccccccc|c}
\quad & b_0 & b_1 & b_1^* & b_2 & b_2^* & b_3 & b_3^* & m \\ \hline 
\delta & 1 & \delta_1 & \delta_1 & \delta_2 & \delta_2 & \delta_3 & \delta_3 & 1 \\
\phi   & 1 & \phi_1 & \phi_1 & \phi_2 & \phi_2 & \phi_3 & \phi_3 & m_{\phi} \\
\psi   & 1 & \psi_1 & \psi_1 & \psi_2 & \psi_2 & \psi_3 & \psi_3 & m_{\psi} \\
\chi & 2 & \chi_1 & \chi_1 & \chi_2 & \chi_2 & \chi_3 & \chi_3 & m_{\chi} 
\end{array}$$ 

\begin{thm} 
Noncommutative rank $7$ RBAs with positive degree map whose degree $2$ irreducible character has $\nu(\chi) = -1$ cannot have integral structure constants.   In particular, there are no association schemes of this type. 
\end{thm}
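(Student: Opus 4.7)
My plan is to exploit the quaternion structure of $A_\chi$. By Lemma~\ref{realrep}, $\chi$ is afforded by a real $*$-representation $\mathcal{X}$; since $\nu(\chi)=-1$, the image $A_\chi = \mathcal{X}(F\mathbf{B})$ is a quaternion division algebra over $F$, split only at the infinite place, and the three skew elements $d_k := b_k - b_k^*$ ($k=1,2,3$) map to pure imaginary quaternions $x_k := \mathcal{X}(d_k)$.

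I would compute $\tau(d_j d_k)$ in two ways. Expanding $d_j d_k = b_j b_k - b_j b_k^* - b_j^* b_k + b_j^* b_k^*$ and applying the pseudo-inverse condition shows that the coefficient of $b_0$ in $d_j d_k$ is $-2\delta(b_j)$ when $j=k$ and $0$ otherwise, so $\tau(d_j d_k) = -2n\delta(b_j)$ for $j=k$ and $0$ for $j \ne k$. On the other hand, writing $\tau = \sum_\psi m_\psi \psi$ and using that every degree-one character vanishes on each $d_i$, we get $\tau(d_j d_k) = m_\chi \chi(d_j d_k) = -2m_\chi \langle x_j, x_k\rangle$, where $\langle\,,\rangle$ denotes the standard inner product on the pure imaginary part of $A_\chi$. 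Comparing shows $\{x_1, x_2, x_3\}$ is an orthogonal basis of that pure imaginary part, with $|x_k|^2 = n\delta(b_k)/m_\chi$.

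The key algebraic step is to lift a quaternion identity to $A$. In $A_\chi$, orthogonal pure imaginary quaternions anticommute and their product is pure imaginary and orthogonal to both factors, so for each cyclic $(i,j,k)$ there is a scalar $\mu_{ij} \in F$ with $x_i x_j = \mu_{ij} x_k$, and a norm calculation gives $\mu_{ij}^2 = |x_i|^2|x_j|^2/|x_k|^2 = n\delta(b_i)\delta(b_j)/(m_\chi \delta(b_k))$. Because every degree-one character vanishes on both sides of $d_i d_j - \mu_{ij} d_k$, semisimplicity of $A$ forces this equality to hold in $A$ itself. Reading off the $b_k$-coefficient in the $\mathbf{B}$-expansion of $d_i d_j$ then expresses $\mu_{ij}$ as a $\mathbb{Z}$-linear combination of structure constants. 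If the structure constants are algebraic integers in $\mathcal{O}_F$, so is $\mu_{ij}$, and therefore $n \delta(b_i) \delta(b_j)/(m_\chi \delta(b_k))$ must be the square of an element of $\mathcal{O}_F$ for every cyclic $(i,j,k)$.

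The hard part is turning this squareness requirement into a concrete impossibility. For association schemes (so $F = \mathbb{Q}$ and every parameter is a positive integer), my plan is to combine (a) the three integer-square conditions on $n\delta(b_i)\delta(b_j)/(m_\chi \delta(b_k))$, (b) the orthogonality identities $\chi_1+\chi_2+\chi_3 = -1$ and $\sum_k \chi_k^2/\delta(b_k) = n/m_\chi - 2$, (c) the analogous $\phi$- and $\psi$-orthogonality sums, whose summands $\phi_k \overline{\phi_k}$ are positive integer algebraic norms, and (d) the multiplicity identity $m_\phi + m_\psi + 4m_\chi = n - 1$. A mod-$8$ analysis via the three-square theorem, or equivalently a Hilbert-symbol computation reflecting the real-place ramification of $A_\chi$, should rule out every remaining parameter set.
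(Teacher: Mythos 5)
The deductions in your first three paragraphs are essentially sound, but the proof has a genuine gap: it stops exactly where the contradiction must be produced. The squareness condition you derive --- that each $n\delta(b_i)\delta(b_j)/(m_\chi\delta(b_k))$ is the square of an algebraic integer --- is a correct necessary condition, but it is not self-contradictory (nothing you have shown prevents all three quantities from being squares), and your final paragraph replaces the required argument with a plan whose conclusion is only asserted (``should rule out every remaining parameter set''). Moreover that plan is formulated only for association schemes over $\mathbb{Q}$ with positive integer parameters, whereas the theorem asserts the nonexistence of \emph{any} such RBA with algebraic-integer structure constants over an arbitrary field of definition, so even a successful mod-$8$ analysis would not prove the stated result. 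Two further points need repair in the part you did carry out: Lemma~\ref{realrep} cannot be invoked here (it assumes a single nonreal pair, and $\nu(\chi)=-1$ says precisely that $\chi$ is \emph{not} realizable over $\mathbb{R}$); what you actually use is the $*$-algebra projection onto the $*$-stable quaternionic simple component, which exists but needs its own justification. You should also justify that the involution induced on $A_\chi$ is the standard quaternion conjugation (it is, because your three independent skew elements force the skew subspace to be $3$-dimensional, ruling out an orthogonal involution), since ``pure imaginary,'' the anticommutation, and the norm computations all depend on that.

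For comparison, the paper's proof bypasses the quaternion structure entirely: $\nu(\chi)=-1$ forces $\nu(\phi)=\nu(\psi)=1$, so the linear characters $\phi\ne\delta$ are real-valued, and with only one real basis element the first orthogonality relation gives $1+2\phi_1+2\phi_2+2\phi_3=0$, i.e.\ $\phi_1+\phi_2+\phi_3=-\tfrac12$, which is impossible for algebraic integers. That relation is already available in your setup (it is orthogonality of $\phi$ against $\delta$, and your item (c) gestures at it), so the fastest repair is to use it directly and discard the quaternion arithmetic altogether.
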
 

\begin{proof}  
Consider the character table above.  The first orthogonality relation implies that the sums of the values of the irreducible characters other than the degree map will be $0$.  For $\phi$ (and $\psi$) this implies $1 + 2 (\phi_1 + \phi_2 +\phi_3) = 0$, and so $\phi_1 + \phi_2 + \phi_3 = -\frac12$.  If the RBA is integral, the values $\phi_1, \phi_2,$ and $\phi_3$ must lie in the ring $\mathcal{O}$ of algebraic integers of the field of character values $\mathbb{Q}(\phi)$.  But then the sum of these numbers would have to be an algebraic integer, so it could not be $-\frac12$. This contradiction implies that any RBA of this type will not have integral structure constants.  
\end{proof}

Non-integral examples satisfying the assumptions of the previous theorem do exist.  It is relatively straightforward to produce an admissible character table (with rational character values and multiplicities), and use it to construct a representation affording $\chi$.  For example, we have constructed a noncommutative rank $7$ RBA with postive degree map that has this (admissible) character table:  
$$\begin{array}{c|ccccccc|c}
\quad & b_0 & b_1 & b_1^* & b_2 & b_2^* & b_3 & b_3^* & m \\ \hline 
\delta & 1 & 2 & 2 & 2 & 2 & 2 & 2 & 1 \\
\phi   & 1 & -\frac52 & -\frac52 & 0 & 0 & 2 & 2 & 52/45  \\
\psi   & 1 & 2 & 2 & -\frac92 & -\frac92 & 2 & 2 & 4/9 \\
\chi & 2 & 0 & 0 & 0 & 0 & -1 & -1 & 26/5
\end{array}$$ 
Its irreducible character of degree $2$ is realized by a $*$-representation $\mathcal{X}: \mathbb{R}\mathbf{B} \rightarrow \mathbb{H}$ that has 
$$\mathcal{X}(b_0) = 1, \mathcal{X}(b_1) = \frac{\sqrt{5}}{2}i, \mathcal{X}(b_2) = \frac{\sqrt{5}}{2}j, \mbox{ and } \mathcal{X}(b_3) = \frac12 + \frac{\sqrt{5}}{2}k. $$  (Here the real quaternion algebra $\mathbb{H}$ is considered in its usual basis $\{1,i,j,k\}$ with the involution $(t + xi + yj + zk)^* = t - xi - yj - zk$.  Its has local index $2$ at the primes $2$ and $\infty$.) 

\bigskip
The author would like to thank the referee for pointing out an error in the submitted version of the article that led to a significant revision of the statement of the main theorem and to the division algebra examples.  The author would also like to express his gratitude to Harvey Blau directing the author to reference \cite{LM2000}, and for some comments that improved the presentation of some arguments here.

\end{document}